\definecolor{verylight}{gray}{0.97}
\definecolor{light}{gray}{0.9}
\definecolor{medium}{gray}{0.85}
\def\NZQ{\mathbb}               
\def\NN{{\NZQ N}}
\def\frk{\mathfrak}               
\def\Phi{{\frk N}}
\def\opn#1#2{\def#1{\operatorname{#2}}} 
\opn\chara{char} \opn\length{\ell} \opn\pd{pd} \opn\rk{rk}
\opn\projdim{proj\,dim} \opn\injdim{inj\,dim} \opn\rank{rank}
\opn\depth{depth} \opn\grade{grade} \opn\height{height}
\opn\embdim{emb\,dim} \opn\codim{codim}
\opn\Tr{Tr} \opn\bigrank{big\,rank}
\opn\superheight{superheight}\opn\lcm{lcm}
\opn\trdeg{tr\,deg}
\opn\reg{reg} \opn\lreg{lreg} \opn\ini{in} \opn\lpd{lpd}
\opn\size{size}\opn{\mult}{mult}
\opn\div{div} \opn\Div{Div} \opn\cl{cl} \opn\Cl{Cl}
\opn\Spec{Spec} \opn\Supp{Supp} \opn\supp{supp} \opn\Sing{Sing}
\opn\Ass{Ass} \opn\Min{Min}
\opn\Ann{Ann} \opn\Rad{Rad} \opn\Soc{Soc}
\opn\Syz{Syz} \opn\Im{Im} \opn\Ker{Ker} \opn\Coker{Coker}
\opn\Am{Am} \opn\Hom{Hom} \opn\Tor{Tor} \opn\Ext{Ext}
\opn\End{End} \opn\Aut{Aut} \opn\id{id} \opn\ini{in}
\opn\nat{nat}
\opn\pff{pf}
\opn\Pf{Pf} \opn\GL{GL} \opn\SL{SL} \opn\mod{mod} \opn\ord{ord}
\opn\Gin{Gin}
\opn\Hilb{Hilb}\opn\adeg{adeg}\opn\std{std}\opn\ip{infpt}
\opn\Pol{Pol}
\opn\sat{sat}
\opn\Var{Var}
\opn\Gen{Gen}
\opn\aff{aff} \opn\con{conv} \opn\relint{relint} \opn\st{st}
\opn\lk{lk} \opn\cn{cn} \opn\core{core} \opn\vol{vol}
\opn\link{link} \opn\star{star}
\opn\gr{gr}
\def\pot#1#2{#1[\kern-0.28ex[#2]\kern-0.28ex]}
\opn\dirlim{\underrightarrow{\lim}}
\opn\inivlim{\underleftarrow{\lim}}
\let\iso=\cong
\let\to=\rightarrow
\def\Implies{\ifmmode\Longrightarrow \else
        \unskip${}\Longrightarrow{}$\ignorespaces\fi}
\def\implies{\ifmmode\Rightarrow \else
        \unskip${}\Rightarrow{}$\ignorespaces\fi}
\def\iff{\ifmmode\Longleftrightarrow \else
        \unskip${}\Longleftrightarrow{}$\ignorespaces\fi}
\newtheorem{Theorem}{Theorem}[section]
\newtheorem{Conjecture}[Theorem]{Conjecture}
\let\epsilon\varepsilon
\let\phi=\varphi
\let\kappa=\varkappa
\def\qed{\ifhmode\textqed\fi
      \ifmmode\ifinner\quad\qedsymbol\else\dispqed\fi\fi}
\def\textqed{\unskip\nobreak\penalty50
       \hskip2em\hbox{}\nobreak\hfil\qedsymbol
       \parfillskip=0pt \finalhyphendemerits=0}
\def\dispqed{\rlap{\qquad\qedsymbol}}
\opn\dis{dis}
\def\pnt{{\raise0.5mm\hbox{\large\bf.}}}
\opn\Lex{Lex}
\newcommand{\inD}[1][\relax]{\def\argone{#1}\def\temprelax{\relax}
  \ifx\argone\temprelax\right.\else\,\middle|#1\right.{}\fi}
\newif\ifbinary
\begin{document}

\title{Finite lattices and Gr\"obner bases}

\author{J\"urgen Herzog and  Takayuki Hibi}
\subjclass{}

\address{J\"urgen Herzog, Fachbereich Mathematik, Universit\"at Duisburg-Essen, Campus Essen, 45117
Essen, Germany} \email{juergen.herzog@uni-essen.de}

\address{Takayuki Hibi, Department of Pure and Applied Mathematics, Graduate School of Information Science and Technology,
Osaka University, Toyonaka, Osaka 560-0043, Japan}
\email{hibi@math.sci.osaka-u.ac.jp}

\thanks{}

\begin{abstract}
Gr\"obner bases of binomial ideals arising from finite lattices will be studied.  In terms of Gr\"obner bases and initial ideals, a characterization of finite distributive lattices
as well as planar distributive lattices will be given.
\end{abstract}
\subjclass{13P10, 06A11, 06B05}
\maketitle

\section*{Introduction}
Let $L$ be a finite lattice and $K[L]$ the polynomial ring in $|L|$ variables over a field $K$ whose variables are the elements of $L$.
A binomial of $K[L]$  of the form $ab - (a\wedge b)(a\vee b)$ is called a basic binomial.  Let $I_L$ denote the ideal of $K[L]$ generated by basic binomials
of $K[L]$.  The ideal $I_L$ was first introduced by \cite{H}.  It is shown in \cite{H} that $I_L$ is a prime ideal if and only if $L$ is a distributive
lattice, see also \cite[Theorem 10.1.3]{HH}.   When $L$ is distributive, the set of basic binomials of $K[L]$ is a Gr\"obner basis with respect to any rank reverse lexicographic order.

In the present paper, by studying Gr\"obner bases of $I_L$, a Gr\"obner basis characterization of distributive lattices as well as
planer distributive lattices will be obtained.  Moreover, we discuss the problem when $I_L$ has a quadratic Gr\"obner basis
with respect to any monomial order.

\section{Characterization of distributive lattices}
We refer the reader to \cite{St} for fundamental materials on finite lattices.
Let $\hat{0}$ (resp.\ $\hat{1}$) denote a unique minimal (resp.\ maximal)
element of a finite lattice.
Recall that a finite lattice $L$ is called {\em modular} if $x \leq b$ implies
$x\vee(a\wedge b) = (x \vee a)\wedge b$ for all $x,a,b \in L$.
A finite lattice is modular if and only if no sublattice of $L$ is isomorphic
the pentagon lattice of Figure $1$.  A finite lattice is called {\em distributive}
if, for all $x,y,z \in L$, the distributive laws
$x\wedge(y\vee z) = (x\wedge y)\vee(y\wedge z)$
and
$x\vee(y\wedge z) = (x\vee y)\wedge(y\vee z)$
hold.
Every modular lattice is distributive.
A modular lattice is distributive if and only if
no sublattice of $L$ is isomorphic to the diamond lattice of Figure $1$.

\begin{figure}[h]
\begin{center}
\psset{unit=0.5cm}
\begin{pspicture}(-10.3,-2.5)(4,3.5)

\psline(2,3)(0.5,1.5)
\psline(0.5,1.5)(0.5,0)
\psline(0.5,0)(2,-1)
\psline(2,-1)(3.5,0.75)
\psline(3.5,0.75)(2,3)

\rput(2,3){$\bullet$}
\put(1.9,3.3){$a$}
\rput(0.5,1.5){$\bullet$}
\put(-0.1,1.5){$b$}
\rput(0.5,0){$\bullet$}
\put(-0.1,0){$c$}
\rput(2,-1){$\bullet$}
\put(1.9,-1.6){$e$}
\rput(3.5,0.75){$\bullet$}
\put(3.8,0.75){$d$}

\psline(-9,3)(-11,1)
\psline(-9,3)(-9,1)
\psline(-9,3)(-7,1)
\psline(-11,1)(-9,-1)
\psline(-9,1)(-9,-1)
\psline(-7,1)(-9,-1)

\rput(-9,3){$\bullet$}
\put(-9.1,3.3){$a$}
\rput(-11,1){$\bullet$}
\put(-11.6,0.9){$b$}
\rput(-9,1){$\bullet$}
\put(-8.6,0.9){$c$}
\rput(-7,1){$\bullet$}
\put(-6.7,1){$d$}
\rput(-9,-1){$\bullet$}
\put(-9.1,-1.6){$e$}
\rput(-8.7,-2.5){Diamond Lattice}
\rput(1.7,-2.5){Pentagon Lattice}
\end{pspicture}
\end{center}
\caption{}\label{Fig1}
\end{figure}

A finite lattice is called {\em pure} if all maximal chains
between $\hat{0}$ and $\hat{1}$ have the same length.
When a finite lattice is pure, then the {\em rank function}
of $L$ can be defined.  More precisely, if $L$ is a finite pure lattice
and $a \in L$, then the {\em rank} of $a$ in L, denoted by
$\rank(a)$, is the largest integer $r$ for which there exists a chain of $L$ 
of the form
\[
\hat{0} = a_0 < a_1 < \cdots < a_r = a.
\]
If a finite lattice $L$ is modular, then one has the equality
\begin{eqnarray}
\label{modular}
\rank(p) + \rank(q) = \rank(p \wedge q) + \rank(p \vee q)
\end{eqnarray}
for all $p, q \in L$.

\medskip
Let $K$ be  field and $L$ a finite lattice. We consider the polynomial ring  $K[L]$ whose variables correspond to the elements of $L$, and define the  ideal $I_L\subset K[L]$  as the binomial ideal whose  generators are   all  basic binomials attached to $L$. As explained in the introduction,   a binomial of the form $ab-cd$ with  $c=a \vee b$ and $c=a\wedge b$ is called a {\em basic binomial}. 
The residue class ring $K[L]/I_L$ will be denoted by $R(L)$

We refer the reader to \cite{HH} for basic terminologies and notation on Gr\"obner bases. 
A {\em  rank reverse lexicographic order} on $K[L]$ is the reverse lexicographic order with the property that $a>b$ if $\rank(a)>\rank(b)$.

\begin{Theorem}
\label{easy}
Let $L$ be a finite modular  lattice. Then  the following conditions are equivalent:
\begin{enumerate}
\item[{\em (i)}] $L$ is a distributive lattice;
\item[{\em (ii)}] $I_L$ has a squarefree Gr\"obner basis
with respect to any rank reverse lexicographic order.
\end{enumerate}
\end{Theorem}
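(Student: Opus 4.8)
The plan is to prove the two implications separately; (i)$\Rightarrow$(ii) is almost immediate from the result quoted in the introduction, and the real work goes into (ii)$\Rightarrow$(i), which I would prove by contraposition. For (i)$\Rightarrow$(ii): when $L$ is distributive the basic binomials form a Gr\"obner basis of $I_L$ for every rank reverse lexicographic order $<$, so one only has to read off the initial monomials. If $a,b\in L$ are incomparable, then $a\wedge b$ lies strictly below each of $a$, $b$, $a\vee b$; since $L$ is pure, $\rank(a\wedge b)$ is then strictly smaller than the ranks of the other three elements, so $a\wedge b$ is the variable of least rank occurring in $ab-(a\wedge b)(a\vee b)$, and as it does not occur in the monomial $ab$ the reverse lexicographic comparison gives $\ini_<\bigl(ab-(a\wedge b)(a\vee b)\bigr)=ab$. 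Hence $\ini_<(I_L)=(ab:a,b\in L\text{ incomparable})$ is squarefree.

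For (ii)$\Rightarrow$(i) I would proceed by contraposition. Assume $L$ is modular but not distributive, so $L$ contains a sublattice isomorphic to the diamond of Figure~\ref{Fig1}; write it $e<b,c,d<a$, with $b,c,d$ pairwise incomparable, $b\wedge c=b\wedge d=c\wedge d=e$ and $b\vee c=b\vee d=c\vee d=a$. By \eqref{modular}, $\rank(b)=\rank(c)=\rank(d)$; set $r:=\rank(b)-\rank(e)$, so $\rank(a)-\rank(e)=2r$ and $r\ge1$. I would then fix a rank reverse lexicographic order $<$ subject to the (permissible) extra requirement that $d$ be the smallest element of $L$ of its rank; in particular $b>d$ and $c>d$. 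The basic binomials $bc-ea$, $bd-ea$, $cd-ea$ lie in $I_L$, with initial monomials $bc$, $bd$, $cd$. Forming $S$-polynomials gives $S(bc-ea,\,bd-ea)=ace-ade$, whose initial monomial is $ace$ (here $c>d$ is used), and then the identity
$ad^2e-a^2e^2=ae(cd-ea)-d^2(bc-ea)+cd(bd-ea)$
shows $ad^2e-a^2e^2\in I_L$; its initial monomial equals $ad^2e$ for \emph{every} rank reverse lexicographic order, so $ad^2e\in\ini_<(I_L)$, and $ad^2e$ is not squarefree. It then remains to show that $\ini_<(I_L)$ has no squarefree generator dividing $ad^2e$, i.e.\ that none of $a,d,e,ad,ae,de,ade$ lies in $\ini_<(I_L)$.

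The linear monomials $a,d,e$ are ruled out because $I_L$ is generated in degree $2$ and so contains no nonzero linear form. For the quadratic ones I would use that $\{x^2:x\in L\}\cup\{xy:x<y\text{ in }L\}$ is a $K$-basis of $R(L)_2$: it spans (for incomparable $x,y$, $\overline{xy}=\overline{(x\wedge y)(x\vee y)}$ with $x\wedge y<x\vee y$), and the number of these elements equals $\dim_K R(L)_2$ because the basic binomials are linearly independent. If $ad\in\ini_<(I_L)$ there would be a homogeneous $f\in I_L$ of degree $2$ with $\ini_<(f)=ad$, so $\overline{ad}$ would be a $K$-combination of classes $\overline m$ of degree-$2$ monomials $m<ad$; but every such $\overline m$ is one of the above basis vectors, and the only one that could equal $\overline{ad}$, namely $\overline{(x\wedge y)(x\vee y)}$ with $x\wedge y=d$ and $x\vee y=a$, comes from a monomial $m=xy>ad$ (as $x,y>d$ forces $d$ to be the least variable occurring, with exponent $0$ in $xy$ and $1$ in $ad$) --- a contradiction. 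The same argument disposes of $ae$ and $de$.

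The genuinely delicate point --- and the step I expect to be the main obstacle --- is ruling out $ade$, where the degree-$2$ bookkeeping no longer helps and modularity must be used again. I would check, using \eqref{modular}, that $x\mapsto s^{\rank(x)-\rank(e)}t^{\rank(a)-\rank(x)}$ for $x$ in the interval $[e,a]$ and $x\mapsto0$ otherwise extends to a $K$-algebra homomorphism $\eta\colon K[L]\to K[s,t]$ killing $I_L$: on a basic binomial $uv-(u\wedge v)(u\vee v)$ with $u,v\in[e,a]$ the two images agree exactly because $\rank(u)+\rank(v)=\rank(u\wedge v)+\rank(u\vee v)$, while if $u\notin[e,a]$ then $u\not\ge e$ forces $u\wedge v\not\ge e$ and $u\not\le a$ forces $u\vee v\not\le a$, so the remaining term also dies. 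Now $\eta(\overline{ade})=s^{3r}t^{3r}$, whereas no degree-$3$ monomial $m<ade$ has $\eta(\overline m)=s^{3r}t^{3r}$: such an $m$ would have to be supported on $[e,a]$ with total rank $3r$, and then $m<ade$ forces $\deg_e(m)=1$, say $m=ez_1z_2$ with $\rank(z_i)-\rank(e)\in[r,2r]$; if $d\notin\{z_1,z_2\}$ then $d$, being the smallest element of its rank, is the least variable other than $e$ occurring in $m$ or $ade$, which forces $m>ade$, a contradiction, and if $d\in\{z_1,z_2\}$ the rank condition forces the other factor to be $a$, so $m=ade$. Hence $\overline{ade}$ is not a $K$-combination of classes of degree-$3$ monomials below it, so $ade\notin\ini_<(I_L)$, and the contradiction is complete. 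The fiddly parts will be checking that $\eta$ is well defined and carrying out this last case analysis carefully, where both the modular hypothesis and the freedom in the tie-breaking are essential.
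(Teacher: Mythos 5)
Your proposal is correct, and it follows the paper's overall strategy for (ii)$\Rightarrow$(i) --- exhibit the diamond $e<b,c,d<a$, choose a rank reverse lexicographic order making $d$ least within its rank, show $ad^2e\in\ini_<(I_L)$ via an explicit syzygy among basic binomials, and then show $ade\notin\ini_<(I_L)$ --- but the technical device you use for the last (and only hard) step is genuinely different. The paper takes $f=ade-u$ to be a \emph{binomial}, writes it as a telescoping sum $f=\sum x_if_i$ of monomial multiples of basic binomials, observes that every variable occurring in the chain stays inside the interval $[e,a]$, and then uses $\rank(a)+\rank(d)=\rank(\ell)+\rank(m)$ together with the tie-break on $d$ to force $\ini_<(f)=u$. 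You instead build the algebra homomorphism $\eta\colon K[L]\to K[s,t]$ supported on $[e,a]$, check (using modularity twice: rank additivity inside the interval, and $u\notin[e,a]$ forcing $u\wedge v\notin[e,a]$ or $u\vee v\notin[e,a]$) that it kills $I_L$, and compare coefficients of $s^{3r}t^{3r}$; the same rank computation then reappears in your case analysis on $m=ez_1z_2$. Your route handles an arbitrary $f\in I_L$ with $\ini_<(f)=ade$ and so does not need the fact that a binomial ideal admits a binomial Gr\"obner basis, which the paper's telescoping argument implicitly relies on; the paper's version is shorter. Two minor remarks: your separate elimination of $a,d,e,ad,ae,de$ is redundant, since any of these lying in $\ini_<(I_L)$ would already put $ade$ there, so ruling out $ade$ alone suffices (this is exactly how the paper argues); and in the step ``$m<ade$ forces $\deg_e(m)=1$'' you should say that $\deg_e(m)\geq 1$ comes from the reverse lexicographic comparison at $e$ while $\deg_e(m)\leq 1$ comes from the rank count (a factor of rank exceeding $\rank(a)$ would otherwise be needed). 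Neither point affects correctness.
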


\begin{proof}
The implication (i) $\Rightarrow$ (ii) is well known, see \cite{H} and  \cite[Theorem 10.1.3]{HH}.

(ii)  $\Rightarrow$ (i):
Suppose that $L$ is a finite modular lattice which is not distributive.
Then $L$ contains the diamond lattice of Figure \ref{Fig1}.
Since $L$ is modular, one has the equality (\ref{modular})
for all $p, q \in L$.  Hence if $g = pq - p'q'$ is a basic monomial
of $I_L$, then
\[
\rank(p) + \rank(q) = \rank(p') + \rank(q').
\]
In particular the ranks of $b,c$ and $d$ coincide.
We fix a rank reverse lexicographic order $<$ with the property that
$d < q$ for all $q \in L$ with $\rank(q) = \rank(d)$.
Our work is to show that $\ini_<(I_L)$ cannot be squarefree.
Suppose, on the contrary, that $\ini_<(I_L)$ is squarefree.

First we claim
$ad^2e\in \ini_<(I_L)$. In fact, $ad^2e-a^2e^2\in I_L$, because
\[
ad^2e-a^2e^2=d\bigl(d(ae-bc)+c(bd-ae)\bigr)+ae(cd-ae).
\]
Since $\ini_<(I_L)$ is squarefree and since $ad^2e\in \ini_<(I_L)$,
it follows that $ade \in \ini_<(I_L)$.
Hence
there exists a binomial $f = ade - u \in I$,
where $u$ is a monomial of degree $3$, with
$\ini_<(f) = ade$.

Let $f = \sum_{i=1}^{N} x_if_i$, where each $x_i$ is a variable
and where each $f_i = v_i - w_i$ is a basic binomial of $I_L$,
such that $x_1v_1 = ade$ and $x_iw_i = x_{i+1}v_{i+1}$ for all
$1 \leq i < N$.
A crucial fact is that each variable appearing in $x_if_i$
belongs to the interval $[e,a]$ of $L$.
To see why this is true, we observe that
if $f_i = v_i - w_i$ is a basic binomial of $I_L$
and if each variable appearing in $v_i$ belongs to $[e,a]$,
then each variable appearing in $w_i$ must belong to $[e,a]$.
Now, since $x_1v_1 = ade$ and $x_iw_i = x_{i+1}v_{i+1}$
for all $1 \leq i < N$,
this observation guarantees that each variable appearing in $x_if_i$
belongs to the interval $[e,a]$ of $L$.
In particular $u = x_Nw_N$ consists of variables
belonging to $[e,a]$, say $u = \ell m n$.

Now, one has $f = ade - \ell m n \in I$,
where $\ell, m$ and $n$ belong to $[e,a]$.
Let $\ell \geq m \geq n$.  Since we are working
with a rank reverse lexicographic order, it follows that
$e$ is the smallest variable among all variables belonging to $[e,a]$.
Since $\ini_<(f) = ade$, one has $n = e$.

On the other hand, since $I_L$ is generated by basic binomials of $L$,
it follows easily that
if $g = p_1p_2\cdots p_r - q_1q_2 \cdots q_r$ is a binomial
belonging to $I_L$, then
\[
\sum_{i=1}^{r} \rank(p_i) = \sum_{i=1}^{r} \rank(q_i).
\]
Thus in particular one has
\[
\rank(a) + \rank(d) = \rank(\ell) + \rank(m).
\]
Since $a$ is a unique maximal element of $[a,e]$, it follows that
$\rank(a) \geq \rank(\ell) (\geq \rank(m))$.
Hence $\rank(d) \leq \rank(m)$. If $\rank(d) =\rank(m)$, then $d<m$ by the given order of the variables. On the other hand, if $\rank(d) < \rank(m)$, then $d<m$, since we use a rank reverse lexicographic order. Thus in any case $d<m$, and this implies that $\ini_<(f) = \ell m n$, a contradiction.
Consequently, the monomial $ade$ cannot belong to $\ini_<(I_L)$.
Hence $ad^2e$ belongs to a unique minimal set of monomial generators
of $\ini_<(I_L)$.  Thus $\ini_<(I_L)$ cannot be squarefree.
\end{proof}

It can be easily checked that for any monomial order, $\ini_<(I_{N_5})$ is squarefree where $N_5$ is  the pentagon lattice,
while $\ini_<(I_{N_3})$ is not squarefree where $N_3$ is the diamond lattice.

\begin{Conjecture}[Squarefree conjecture]
\label{believeitornot}
{\em Let $L$ be a modular lattice. Then for any monomial order $\ini_<(I_L)$ is not squarefree, unless $L$ is distributive.}
\end{Conjecture}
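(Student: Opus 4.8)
Since a monomial ideal is squarefree precisely when it is radical, the aim is to show that for every modular non-distributive $L$ and every monomial order $<$ the ideal $\ini_<(I_L)$ contains a monomial $m$ whose squarefree part $\sqrt m$ is a standard monomial, i.e.\ the $<$-least monomial of its residue class modulo $I_L$. (One should not hope to reduce this to non-radicality of $I_L$: already $I_{N_3}=(bc-ae,bd-ae,cd-ae)$ is a height-three complete intersection, hence radical, so the argument is genuinely order-dependent.) I would first make several reductions. Since $I_L$ is generated in degree two it is homogeneous, and $\ini_<$ of a homogeneous ideal is unchanged when $<$ is replaced by its graded refinement, so I may assume $<$ is graded. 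As $L$ is modular but not distributive it contains a sublattice isomorphic to the diamond $N_3$ of Figure \ref{Fig1}; write $e,a,b,c,d$ for its bottom, top and three middle elements. Joins and meets in a sublattice agree with those in $L$, so $bc-ae,bd-ae,cd-ae$ are basic binomials of $L$, whence $I_{N_3}\subseteq I_L$ and $\ini_<(I_{N_3})\subseteq\ini_<(I_L)$. I would then invoke the interval-confinement fact from the proof of Theorem \ref{easy}: a basic binomial $pq-(p\wedge q)(p\vee q)$ of $L$ has all variables in the interval $[e,a]$ as soon as one of its terms does (because $p\wedge q\leq p,q\leq p\vee q$), so the $I_L$-residue class of any monomial supported on $[e,a]$ is supported on $[e,a]$ and equals its $I_{[e,a]}$-residue class. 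Since the witnesses will be supported on $\{a,b,c,d,e\}$, I may replace $L$ by $[e,a]$, and thus assume $\hat{0}=e$, $\hat{1}=a$, and that $b,c,d$ are pairwise complements, so that $\rank(b)=\rank(c)=\rank(d)=\tfrac12\rank(\hat1)$ by modularity. Finally, permuting $b,c,d$ is a symmetry of $\{bc-ae,bd-ae,cd-ae\}$, so I may assume $b>c>d$.

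Modulo $I_{N_3}$ one has $bc\equiv bd\equiv cd\equiv ae$, and I would use two derived relations: $ad^2e-a^2e^2\in I_{N_3}$ (the identity already displayed in the proof of Theorem \ref{easy}) and $c^2d-cd^2\in I_{N_3}$ (since $c^2d\equiv ace$, $cd^2\equiv ade$ and $ace\equiv bcd\equiv ade$, rewriting $ae\equiv bd$ in $ace$ and $ae\equiv bc$ in $ade$); both also lie in $I_{[e,a]}\subseteq I_L$. The degree-two part of the residue class of $ae$ is $\{ae\}\cup\{xy:\ x\vee y=\hat1,\ x\wedge y=\hat0\}$, and $b>c>d$ makes $cd$ the least of $bc,bd,cd$. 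If $cd$ is the $<$-least monomial of the whole class, then $cd$ is standard, while $c^2d\in\ini_<(I_L)$ (as $c>d$ it is the leading term of $c^2d-cd^2$) and $\sqrt{c^2d}=cd$, so $\ini_<(I_L)$ is not squarefree. If instead $ae$ is that least monomial, then $ae$ is standard; if moreover $ae>d^2$ then $a^2e^2$ is the leading term of $ad^2e-a^2e^2$, so $a^2e^2\in\ini_<(I_L)$ with $\sqrt{a^2e^2}=ae$ standard; and if $ae<d^2$ then $ad^2e\in\ini_<(I_L)$, and it remains to certify $ade=\sqrt{ad^2e}$ as standard.

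For $L=N_3$ this last certification is immediate: from $b>c>d$ and $ae<bc,bd,cd$ one gets $ade<abe<b^2c$, $ade<ace<bc^2$, $ade<bcd,bd^2,cd^2$, and $ade<b^2d,c^2d$ (since $b^2>bc>ae$, $c^2>cd>ae$), so $ade$ is the minimum of its ten-element residue class. Hence the conjecture holds for $L=N_3$, and in general whenever the distinguished monomial $cd$, $ae$ or $ade$ can be shown to be the $<$-minimum of its $I_L$-residue class.

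The hard part is exactly this certification for lattices strictly larger than $N_3$. The extra basic binomials enlarge the residue classes — the degree-two class of $ae$ gains $xy$ for every complementary pair $\{x,y\}$ of $[e,a]$, not only the three coming from $b,c,d$ — so one must find the $<$-minimum of the enlarged class and adapt the witness accordingly: if that minimum is $x_0y_0$, one wants $x_0$ and $y_0$ to possess a common complement $z$ (so that $x_0^2y_0\equiv x_0y_0z\equiv x_0y_0^2$ modulo $I_L$, making $x_0^2y_0$ the sought non-squarefree witness), but a complementary pair in a modular lattice need not extend to a diamond. The structural inputs available are interval confinement together with the rank identity $\sum\rank(p_i)=\sum\rank(q_i)$ for binomials of $I_L$, which pin the competing monomials down to a finite set of fixed rank-sum; converting this into an order comparison valid for an arbitrary graded order, without the rank-reverse-lexicographic order of Theorem \ref{easy} to rely on, is the essential obstacle. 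I expect a complete proof to turn on choosing the diamond $\{a,b,c,d,e\}$ — and within it the complementary pair used for the witness — adapted to the given order $<$, and on verifying that in a modular non-distributive lattice such an adapted choice always exists.
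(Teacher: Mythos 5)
First, a point of comparison: the paper does not prove this statement. It is posed as Conjecture~\ref{believeitornot} and left open; the authors only prove the rank-reverse-lexicographic case (Theorem~\ref{easy}) and assert without proof that $\ini_<(I_{N_3})$ is never squarefree. So there is no proof in the paper to measure yours against, and the only question is whether your argument settles the conjecture. By your own account it does not, and the gap you flag is genuine.

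What you do establish is correct and nontrivial: the reduction to graded orders, the inclusion $\ini_<(I_{N_3})\subseteq \ini_<(I_L)$, the interval-confinement argument showing that a monomial supported on $[e,a]$ is $I_L$-standard if and only if it is $I_{[e,a]}$-standard (using that standard monomials of a pure-difference binomial ideal are the $<$-minima of the congruence classes), and the full case analysis for $L=N_3$: the minimum of the degree-two class of $ae$ is $cd$ or $ae$, and the witnesses $c^2d$, $a^2e^2$, $ad^2e$ (according as $cd<ae$; $ae<cd$ and $d^2<ae$; $ae<cd$ and $ae<d^2$) each have standard squarefree part, the last after your verification that $ade$ is minimal in its ten-element class. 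This is a clean proof of the paper's unproved remark about $N_3$ and of the conjecture whenever the diamond can be chosen with $[e,a]\iso N_3$. The missing step is exactly the one you name: after restricting to $[e,a]$, congruence classes must be computed modulo $I_{[e,a]}$, not $I_{N_3}$; the degree-two class of $ae$ then contains $xy$ for \emph{every} complementary pair of $[e,a]$, and if its minimum $x_0y_0$ is attained at a pair with no common complement, none of your three witnesses is available and you offer no replacement. Your closing suggestion --- choose the diamond, and the complementary pair within it, adapted to $<$ --- is a plan, not an argument; a complementary pair in a modular lattice need not extend to a diamond, as you note, so the existence of such an adapted choice is precisely what would have to be proved. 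The proposal is therefore a correct partial result, not a proof of the conjecture.
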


\section{Characterization of planar distributive lattices}

Let $\NN^2$ denote the (infinite) distributive lattice consisting of all pairs $(i, j)$  of nonnegative
integers with the partial order $(i, j)\leq  (k, l)$ if and only if  $i\leq  k$ and $j\leq l$. A {\em planar distributive lattice}
is a finite sublattice $L$ of $\NN^2$   with $(0, 0)\in D$.

\begin{Theorem}
\label{alsoeasy}
Let $L$ be a finite modular  lattice. Then  the following conditions are equivalent:
\begin{enumerate}
\item[{\em (i)}] $L$ is a planar distributive lattice;
\item[{\em (ii)}] $I_L$ has a squarefree  initial  ideal with respect to the lexicographic order for any order of the variables;
\item[{\em (iii)}] $I_L$ has a squarefree  initial  ideal with respect to any monomial order.
\end{enumerate}
\end{Theorem}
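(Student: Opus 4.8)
The plan is to establish the cycle of implications (i) $\Rightarrow$ (iii) $\Rightarrow$ (ii) $\Rightarrow$ (i), using Theorem \ref{easy} as a key input for the last and hardest step. For (i) $\Rightarrow$ (iii), suppose $L$ is a planar distributive lattice, so that $L$ is a finite sublattice of $\NN^2$ containing $(0,0)$. The essential point is that for each element $a = (a_1,a_2)$ we may introduce two ``coordinate weights'' and define, for any monomial $\prod_{a} a^{c_a}$, the pair $\bigl(\sum c_a a_1,\sum c_a a_2\bigr)$. Since $(a\wedge b)_i + (a\vee b)_i = a_i + b_i$ in $\NN^2$ for $i=1,2$, every basic binomial $ab-(a\wedge b)(a\vee b)$ is homogeneous with respect to both $\ZZ^2$-gradings; hence all of $I_L$ is $\ZZ^2$-graded. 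I would then argue that after a generic choice of weights refining any given monomial order, every reduced Gröbner basis element is a binomial $u-v$ with $u,v$ of the same $\ZZ^2$-degree, and use the planarity — concretely the fact that an interval $[a,b]$ in a planar distributive lattice is again planar and that chains in $[a,b]$ are totally ordered by the difference $b-a$ being ``spread out'' — to show no initial monomial can be divisible by the square of a variable. The cleanest route is probably to invoke the known description of $R(L)=K[L]/I_L$ as a Hibi ring together with the fact that for planar $L$ the Hibi ring is an ASL / toric ring defined by a sorted set of squarefree monomials; squarefreeness of all initial ideals then follows from the existence of a squarefree initial ideal being inherited under the standard ``sorting'' argument, or simply from $R(L)$ being a polytopal semigroup ring of a $2$-dimensional (hence normal, compressed) polytope.

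The implication (iii) $\Rightarrow$ (ii) is trivial, since the lexicographic orders are among all monomial orders. The substantive converse is (ii) $\Rightarrow$ (i). Here I would assume $L$ is a finite modular lattice such that $I_L$ has a squarefree initial ideal with respect to every lexicographic order. By Theorem \ref{easy} — noting that a rank reverse lexicographic order is in particular covered once we also know $L$ is modular, but more carefully: we first apply the contrapositive of Theorem \ref{easy} to deduce $L$ must be distributive, because if $L$ were modular but not distributive then some rank reverse lexicographic initial ideal is non-squarefree; one has to check that the hypothesis in (ii), phrased for lexicographic orders, still yields a contradiction, which it does after observing that the offending monomial $ad^2e$ in the proof of Theorem \ref{easy} is detected by a suitable \emph{lexicographic} order as well (one orders the variables so that $d$ is small and reruns the degree/rank bookkeeping). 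So $L$ is distributive. It then remains to show a distributive lattice all of whose lexicographic initial ideals are squarefree must be planar, equivalently (by a theorem of Dilworth / the characterization of planar distributive lattices) that the poset $P$ of join-irreducibles of $L$ has width at most $2$.

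The main obstacle is exactly this last step: ruling out distributive lattices whose join-irreducible poset contains an antichain of size $3$. The strategy is to exhibit, for such an $L$, one explicit lexicographic order under which $\ini_<(I_L)$ fails to be squarefree. If $p_1,p_2,p_3$ are pairwise incomparable join-irreducibles, consider the six poset ideals $\alpha_i = \langle p_i\rangle$ and the sublattice they generate; inside $R(L)$ this produces a copy of the Segre-type relation forcing a binomial whose only possible initial term under a lex order that ranks the ``meet'' elements low is divisible by the square of the variable corresponding to $\alpha_1\cap\alpha_2\cap\alpha_3$. Concretely I expect the relevant binomial to be of the shape $x_{I_1}x_{I_2}x_{I_3}\,x_{J} - x_{K}^2\,x_{\cdots}$ with $K = I_1\cap I_2\cap I_3$, mirroring the role of $ad^2e$ in Theorem \ref{easy}; the work is to choose the lex order (put $x_K$ and the ``meet'' variables at the bottom) and to verify via the rank/degree identity $\sum \rank p_i = \sum \rank q_i$ — valid for any binomial in $I_L$ — that no competing binomial can have a squarefree initial monomial dividing the same generator. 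Establishing that this local obstruction genuinely survives into $\ini_<(I_L)$, i.e. that the square-divisible monomial is a minimal generator and not reducible by squarefree ones, is the delicate combinatorial heart of the argument, and I would model it line-by-line on the corresponding passage in the proof of Theorem \ref{easy}.
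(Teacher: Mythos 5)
Your implication (i) $\Rightarrow$ (iii) has a genuine gap: none of the mechanisms you propose actually delivers squarefreeness of \emph{every} initial ideal. The $\ZZ^2$-grading you introduce is correct but far too coarse --- $I_L$ is the kernel of the map sending $a=(i,j)$ to $s_it_j$ in $K[t_1,\dots,t_m,s_1,\dots,s_n]$, i.e.\ a toric ideal graded by $\ZZ^{m+n}$, and being $\ZZ^2$-homogeneous does not come close to characterizing membership in $I_L$, so no conclusion about initial monomials follows from it. Your ``cleanest route,'' that $R(L)$ is the polytopal semigroup ring of the $2$-dimensional polytope $\con(L)\subset\RR^2$, is false: the coordinates of $(i,j)$ act as \emph{indices} of the variables $s_i,t_j$, not as exponents. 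Already for a three-element chain $L\subset\NN^2$ one has $I_L=0$, while the semigroup ring of the corresponding lattice points has a nontrivial quadratic relation; so the two rings differ and the ``$2$-dimensional, hence unimodular'' argument does not apply. The paper's actual route is to identify $R(L)$ with the edge ring of a bipartite graph, using \cite[Theorem 1.2]{OH} together with Qureshi's verification that every even cycle of length $\geq 6$ in this graph has a chord, and then to invoke Sturmfels' theorem that the universal Gr\"obner basis of a bipartite toric edge ideal consists of the (squarefree) binomials of even cycles. Some identification of this kind is the indispensable step, and your proposal does not supply it.

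Your (ii) $\Rightarrow$ (i) is structurally the same as the paper's ($B_3$ as the obstruction to planarity, an explicit lexicographic order, and passage from the sublattice to $L$), but the decisive verification is exactly the part you defer. The paper's witness inside $I_{B_3}$ is the degree-$3$ binomial $ah^2-efg$ (with $a$ the top, $h$ the bottom, $e,f,g$ the atoms), whose initial term $ah^2$ is a \emph{minimal} generator of $\ini_<(I_{B_3})$ for the lex order induced by $g<f<e<h<a<d<c<b$; one then lifts to $L$ because lex is an elimination order. Your proposed degree-$4$ witness of the shape $x_{I_1}x_{I_2}x_{I_3}x_J-x_K^2x_{\cdots}$ is not the right object, and checking minimality of the square-divisible generator is precisely the content you label ``the delicate combinatorial heart'' without carrying it out. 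Finally, you are right to worry that Theorem~\ref{easy} concerns rank reverse lexicographic orders while hypothesis (ii) here concerns lexicographic ones; this mismatch (how the diamond is excluded using only lex orders) is not resolved in your sketch, so the reduction to the distributive case is also incomplete as written.
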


\begin{proof}
(i) \implies (iii): Let $m$ and $n$ be the smallest integers such that $L\subset [m]\times [n]$, and consider the polynomial ring $T=K[t_1,\ldots,t_m,s_1,\ldots,s_n]$. We define a $K$-algebra homomorphism $\varphi\: R(L)\to T$ which assigns to $a=(i,j)$ the monomial $s_it_j\in T$. The image $A$ of $\varphi$ is the edge ring of a bipartite graph $G$. The basic relations $ab=(a\vee b)(a\wedge b)$ of $R(L)$ are mapped under $\varphi$ to relations of $A$ corresponding to $4$-cycles of the bipartite graph $G$.
It  is shown in \cite[Theorem 1.2]{OH} that the defining ideal of the edge ring $A$ is generated by the binomials corresponding to 4-cycles, if each even cycle of length $\geq 6$ has a chord. That $G$ satisfies this property  is shown by Querishi  \cite{Q}. It follows that $R(L)\iso A$. Hence we may identify $I_L$ with the toric edge ideal $J$ defining $A$. Next we use a result of Sturmfels (see \cite[Chapter 9]{Stu}) which says the universal Gr\"obner basis of the toric edge ideal of bipartite graph consists of the binomials corresponding to the even cycles with no chords. From this it follows that $\ini_<(J)$ is squarefree for any monomial order.

(iii) \implies (ii) is trivial.

(ii) \implies (i): Suppose $L$ is not planar, then $L$ contains a sublattice which is isomorphic to the Boolean lattice  $B_3$ of rank 3 as shown in Figure~\ref{Fig2}.

\begin{figure}[h]
\begin{center}
\psset{unit=0.7cm}
\begin{pspicture}(1,-1.5)(5,3)

\psline(3,3)(1.5,1.5)
\psline(3,3)(3,1.5)
\psline(3,3)(4.5,1.5)
\psline(1.5,1.5)(1.5,0)
\psline(1.5,1.5)(3,0)
\psline(3,1.5)(1.5,0)
\psline(3,1.5)(4.5,0)
\psline(4.5,1.5)(4.5,0)
\psline(4.5,1.5)(3,0)
\psline(1.5,0)(3,-1.5)
\psline(3,0)(3,-1.5)
\psline(4.5,0)(3,-1.5)

\rput(3,3){$\bullet$}
\put(3,3.2){$a$}
\rput(1.5,1.5){$\bullet$}
\put(1.05,1.4){$b$}
\rput(3,1.5){$\bullet$}
\put(2.6,1.5){$c$}
\rput(1.5,0){$\bullet$}
\put(1.1,-0.1){$e$}
\rput(4.5,1.5){$\bullet$}
\put(4.7,1.4){$d$}
\rput(3,0){$\bullet$}
\put(2.55,-0.25){$f$}
\rput(4.5,0){$\bullet$}
\put(4.7,-0.1){$g$}
\rput(3,-1.5){$\bullet$}
\put(2.8,-2){$h$}

\end{pspicture}
\end{center}
\caption{}\label{Fig3}
\end{figure}

Let $<$ be the lexicographic order induced by an  ordering such that $g<f<e<h<a<d<c<b$ and $b<q$ for any other $q\in L$. The initial ideal of $I_{B_3}$ contains the monomial $ah^2$  in the minimal set of monomial generators. Since $<$ is an elimination order (see \cite[Exercise 2.9]{HH}) it follows that $ah^2$ belongs to the minimal set of monomial generators of  $I_L$.
\end{proof}

In contrast to the order given in the proof of the preceding theorem, there exist lexicographic orders such that  $I_{B_3}$  is quadratic, or not quadratic but squarefree. The question arises whether for any finite distributive lattice there exists a lexicographic monomial order such that $\ini_<(I_L)$ is squarefree.

Recall that the {\em divisor lattice} of a positive integer $n$ is the lattice $D_n$ consisting of all divisors of $n$ ordered by divisibility.  Every divisor lattice is a distributive lattice.

Let, in general, $L$ be a finite pure lattice.  A {\em cut edge} of $L$ is a pair $(a,b)$ of elements of $L$ with $\rank(b) = \rank(a)+1$ such that
$$|\{c\in L\:\; \rank(c)=\rank(a)\}|= |\{c\in L\:\; \rank(c)=\rank(b)\}|=1.$$

\begin{Theorem}
\label{againeasy}
Let $L$ be a finite lattice with no cut edges. Then the following conditions are equivalent:
\begin{enumerate}
\item[{\em (i)}] $L$ is the divisor lattice of $2\cdot 3^r$ for some $r\geq 1$;
\item[{\em (ii)}] $I_L$ has a quadratic Gr\"obner basis with respect to any monomial order.
\end{enumerate}
\end{Theorem}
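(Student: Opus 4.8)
The plan is to prove the two implications separately, exploiting the ``no cut edges'' hypothesis to control the combinatorics of $L$.

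For (i) $\Rightarrow$ (ii): The divisor lattice $D_{2\cdot 3^r}$ is a planar distributive lattice—indeed it embeds in $\NN^2$ as the rectangle $[1]\times[r]$ (a $2\times(r+1)$ grid), with $(i,j)$ corresponding to $2^i3^j$. By Theorem~\ref{alsoeasy}, $I_L$ already has a squarefree initial ideal with respect to any monomial order. What remains is to upgrade ``squarefree'' to ``quadratic.'' Following the proof of Theorem~\ref{alsoeasy}, $I_L$ is identified with the toric edge ideal $J$ of the bipartite graph $G$ attached to the grid, and the universal Gr\"obner basis of $J$ consists of the binomials corresponding to chordless even cycles of $G$. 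So it suffices to show that for $L=D_{2\cdot 3^r}$ the bipartite graph $G$ has no chordless cycle of length $\geq 6$; equivalently, every cycle of length $\geq 6$ has a chord. This is a direct check: the grid $[1]\times[r]$ is a ``ladder,'' and any cycle in the associated bipartite graph that uses three or more ``rungs'' has a chord connecting two consecutive rungs. I would verify that the chordless cycles are exactly the $4$-cycles coming from the unit squares of the ladder, so the universal Gr\"obner basis—hence every reduced Gr\"obner basis—is quadratic.

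For (ii) $\Rightarrow$ (i): Assume $I_L$ has a quadratic Gr\"obner basis for every monomial order. I would argue in several reductions. First, a quadratic Gr\"obner basis forces $I_L$ to be a quadratic ideal, and in particular $R(L)$ is Koszul; by the prime criterion from \cite{H}, $L$ must be distributive (a non-distributive modular lattice already fails by Theorem~\ref{easy}, and handling the non-modular case needs the pentagon analysis). Next, Theorem~\ref{alsoeasy} is not available directly, but the quadratic-for-all-orders hypothesis is strictly stronger than squarefree-for-all-orders, so $L$ is in fact planar distributive; thus $L$ sits in $\NN^2$ as a finite sublattice containing $(0,0)$. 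The task is then to show that among planar distributive lattices with no cut edges, only the $2\times(r+1)$ grids admit a quadratic Gr\"obner basis for every order—and the no-cut-edge hypothesis is what rules out the degenerate thin pieces that would otherwise survive. The mechanism for excluding everything else is the same as in Theorem~\ref{alsoeasy}: if $L$ is ``thick'' enough—say its Hasse diagram contains a $3\times 2$ cell configuration, i.e. a sublattice isomorphic to the $3\times 3$ grid region—then the associated bipartite graph has a chordless $6$-cycle, and using a lexicographic elimination order as in the proof of Theorem~\ref{alsoeasy} one exhibits a degree-$3$ generator of $\ini_<(I_L)$, contradicting quadraticity. Combined with the no-cut-edge condition (which forbids the lattice from being ``one element wide'' anywhere along its rank), the only planar shapes left are the grids $[1]\times[r]$, i.e. $L=D_{2\cdot 3^r}$.

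The main obstacle will be the second implication, specifically the classification step: showing that \emph{every} planar distributive lattice with no cut edges that is not a $2\times(r+1)$ grid fails to have a quadratic Gr\"obner basis for \emph{some} monomial order. One must pin down exactly which local configuration to forbid—presumably an embedded $3\times 3$ grid, whose toric edge ideal has an irredundant chordless $6$-cycle binomial—and then propagate the failure from the sublattice up to $L$ via an elimination (lexicographic) order, exactly as in Theorem~\ref{alsoeasy}'s (ii) $\Rightarrow$ (i) argument. Verifying that the no-cut-edge hypothesis together with ``no embedded $3\times 3$ grid'' pins $L$ down to a ladder is a purely combinatorial lemma about finite sublattices of $\NN^2$, which I would isolate and prove first before running the Gr\"obner-basis machinery.
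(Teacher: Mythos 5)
Your implication (i)\;\(\Rightarrow\)\;(ii) is essentially the paper's argument: identify \(I_L\) with the toric edge ideal of the bipartite graph of the \(2\times(r+1)\) grid and observe that the universal Gr\"obner basis is quadratic. (One small correction: that graph is the \emph{complete} bipartite graph on parts of sizes \(2\) and \(r+1\), not a ladder, so every cycle has length \(4\) outright; your ``rungs'' picture is unnecessary but harmless.) For (ii)\;\(\Rightarrow\)\;(i) your overall architecture also matches the paper -- exclude the diamond, the pentagon and \(B_3\) to force \(L\) to be planar distributive, then exclude one further planar configuration and invoke the no-cut-edge hypothesis -- but the classification step contains a genuine gap. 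The configuration that must be forbidden is not an embedded \(3\times 3\) grid but the seven-element lattice \(C_2\) of Figure~\ref{Fig2}: two diamond cells glued at a single vertex (a ``staircase turn''). \(C_2\) itself has no cut edges (its rank levels have sizes \(1,2,1,2,1\), never two consecutive singletons) and contains no \(3\times 3\) grid, so your proposed lemma ``no cut edges and no embedded \(3\times 3\) grid implies ladder'' is simply false, with \(C_2\) as a counterexample; longer zigzag staircases escape it too.

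Your proposed mechanism for producing a cubic generator is also off target. The bipartite graph of the full \(3\times 3\) grid is \(K_{3,3}\), in which every \(6\)-cycle has a chord, so there is no chordless \(6\)-cycle to point to; and in \(C_2\) the unique \(6\)-cycle likewise has a chord (the middle vertex \(d\)). Nevertheless a cubic minimal generator does appear: with \(I_{C_2}=(ef-gd,\;bc-da)\) and the lexicographic order induced by \(g<f<e<c<b<a<d\), the initial terms of the generators are \(gd\) and \(da\), and their S-polynomial \(aef-gbc\) has initial term \(aef\), divisible by neither -- this is exactly the computation the paper records. So ``chordless \(6\)-cycle'' is not the right criterion for when a degree-\(3\) element is forced into a reduced Gr\"obner basis; the paper instead exhibits explicit elimination orders and explicit witnesses (\(ad^2e\) for the diamond, \(bae\) for the pentagon, \(ah^2\) for \(B_3\), \(aef\) for \(C_2\)). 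Relatedly, your intermediate claim that quadratic-for-all-orders is ``strictly stronger than'' squarefree-for-all-orders is not a valid implication (a quadratic Gr\"obner basis can have non-squarefree initial terms); the reduction to planarity goes through only because the specific witnesses above all have degree at least \(3\). To repair your proof you should replace the \(3\times 3\)-grid lemma by: a planar distributive lattice with no cut edges and no sublattice isomorphic to \(C_2\) is a \(2\times(r+1)\) grid, and handle \(C_2\) by the explicit S-polynomial computation rather than by a chordless-cycle argument.
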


\begin{proof} (i)\implies (ii): The ideal $I_L$ can be identified with the toric edge ideal of the complete bipartite graph of type $(2,r)$, since $L$ has no cut edges. 
Each cycle in a bipartite graph of type $(2,r)$ is of length $4$. Hence by using again \cite[Chapter 9]{Stu}, the basic binomials of $I_L$ form a universal Gr\"obner basis. This yields the desired conclusion.

(ii)\implies (i): As we have seen in the proofs of Theorem~\ref{easy} and in Theorem~\ref{alsoeasy} that the lattice $L$ cannot contain as a sublattice the diamond lattice and the Boolean lattice $B_3$. It also cannot contain the pentagon lattice $N_5$ of Figure~\ref{Fig1}. Indeed, if we choose the  lexicographic order induced by $c<e<a<d<b$, then $bae$ is a minimal generator of $\ini_<(I_{N_5})$. This implies that $L$ is a planar distributive lattice. Finally, $L$ cannot contain as a sublattice the planar distributive lattice $C_2$ of Figure~\ref{Fig3}.

\begin{figure}[h]
\begin{center}
\psset{unit=0.6cm}
\begin{pspicture}(1,-1.5)(5,3)

\pspolygon(3,3)(4,2)(2,0)(3,-1)(4,0)(2,2)(3,3)

\rput(3,3){$\bullet$}
\put(2.9,3.3){$a$}
\rput(2,2){$\bullet$}
\put(1.5,1.9){$b$}
\rput(3,1){$\bullet$}
\put(3.3,0.9){$d$}
\rput(4,0){$\bullet$}
\put(4.2,-0.1){$f$}
\rput(3,-1){$\bullet$}
\put(2.9,-1.5){$g$}
\rput(2,0){$\bullet$}
\put(1.5,-0.1){$e$}
\rput(4,2){$\bullet$}
\put(4.2,1.9){$c$}

\end{pspicture}
\end{center}
\caption{}\label{Fig2}
\end{figure}

 In fact, if we choose the lexicographic order induced by $g<f<e<c<b<a<d$, then $aef$ is a minimal generator of $\ini_<(I_{N_5})$.  Hence $L$ must be the divisor lattice of $2\cdot 3^r$ for some $r \geq 2$.
\end{proof}

As a strengthening of Theorem~\ref{againeasy} we expect

\begin{Conjecture}[Quadratic conjecture]
{\em Let $I$ be an ideal generated by binomials such that  $I$ has a quadratic Gr\"obner basis with respect to any monomial order. Then either the  generators of $I$ are binomials in pairwise different sets of variables or $I=I_L$ where $L$ is the  divisor lattice of $2\cdot 3^r$ for some $r\geq 1$.}
\end{Conjecture}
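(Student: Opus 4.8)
The plan is to reduce the general statement to the toric edge-ring classification that already drives Theorem~\ref{againeasy}, and to isolate the genuinely new difficulty, which is passing from an \emph{arbitrary} binomial ideal to a lattice ideal. Since $I$ has a quadratic universal Gr\"obner basis, every reduced Gr\"obner basis (for every monomial order) is quadratic; in particular $I$ is generated by quadratic binomials, and these generators essentially constitute the universal Gr\"obner basis. I would first build the graph on the variables whose edges are the supports of the generators and pass to its connected components. Across distinct components the leading terms are coprime, so all S-pairs reduce to zero and the universal Gr\"obner basis is the union of the component ones; hence the hypothesis is inherited component-wise, and it suffices to analyse a single \emph{indecomposable} component $I'$. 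If $I'$ is a single binomial on its own variables, then $\{I'\}$ is its own universal Gr\"obner basis and we land in the first alternative (and the generator is forced to be quadratic). The substance of the conjecture is therefore the claim that an indecomposable component carrying more than one overlapping generator must equal $I_{D_{2\cdot 3^r}}$, the two alternatives in the statement corresponding to ``all components are single binomials'' versus ``$I$ is itself such a lattice ideal''.

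Working inside an indecomposable $I'$, I would next strip away reducible quadratic generators. A generator of the form $x_i(x_j-x_k)$ or a difference of squares $x_i^2-x_j^2=(x_i-x_j)(x_i+x_j)$ signals either a linear binomial in $I'$ or a genuine splitting of the variety; I would show, using indecomposability together with the rigidity that all reduced Gr\"obner bases are quadratic, that any such linear binomial $x_j-x_k$ must in fact lie in $I'$. Identifying variables along these linear relations, I may then assume $I'$ is generated by \emph{coprime} quadratic binomials $x_ix_j-x_kx_l$ and $x_i^2-x_jx_k$. The central step is to prove that $I'$ is \emph{prime}, hence a lattice ideal and, after rescaling the variables, a toric ideal $I_A$. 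Here I would argue that no associated prime of $I'$ can contain a variable: a monomial in $\sqrt{I'}$, or an embedded component, would for a suitable elimination order produce standard monomials incompatible with every reduced Gr\"obner basis being degree $2$. This invokes the Eisenbud--Sturmfels structure theory of binomial primary decomposition, heavily constrained by the quadratic universal Gr\"obner basis.

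Once $I'=I_A$ is toric, I would show that in the indecomposable coprime setting no generator of shape $x_i^2-x_jx_k$ can survive, so that $A$ is a $0/1$ incidence matrix and $I_A$ is the toric edge ideal of a graph $G$. Bipartiteness of $G$ then follows, since a non-bipartite $G$ contributes odd-cycle binomials to the Graver basis, and these cannot be quadratic while remaining primitive. With $G$ bipartite I would invoke Sturmfels \cite[Chapter~9]{Stu}, exactly as in Theorem~\ref{againeasy}: the universal Gr\"obner basis consists of the binomials attached to chordless even closed walks, so quadraticity forces every such walk to be a $4$-cycle. Combined with indecomposability this pins $G$ down to the complete bipartite graph of type $(2,r)$, whose edge ideal is precisely $I_{D_{2\cdot 3^r}}$, completing the dichotomy.

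I expect the main obstacle to be the primeness step, and this is exactly what keeps the statement a conjecture rather than a theorem. Unlike the earlier results, where $I=I_L$ is prime by construction and the analysis is purely combinatorial, here one must manufacture primeness out of nothing: general binomial ideals may be non-radical and may carry embedded associated primes, and converting the phrase ``quadratic for every monomial order'' into the exclusion of \emph{every} unwanted associated prime is the delicate heart of the argument. The preceding steps (the component decomposition, the elimination of linear and reducible generators, and the final graph-theoretic classification via \cite[Chapter~9]{Stu}) are comparatively routine once the ideal is known to be a toric edge ideal; it is the reduction to that situation that resists a direct proof.
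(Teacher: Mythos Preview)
The statement you are attempting is labeled a \emph{Conjecture} in the paper, and the paper gives no proof; it is stated immediately after Theorem~\ref{againeasy} as an open expectation. There is therefore no ``paper's own proof'' to compare your proposal against.

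Your outline is a reasonable programme, and you yourself correctly flag the decisive gap: the passage from an arbitrary binomial ideal with quadratic universal Gr\"obner basis to a \emph{prime} (hence toric) ideal is not established. Everything downstream of primeness---the identification with a toric edge ideal, bipartiteness, and the use of Sturmfels' description of the universal Gr\"obner basis of a bipartite edge ideal---is essentially the argument of Theorem~\ref{againeasy} and would go through once primeness is known. But the steps you sketch toward primeness (ruling out monomials in $\sqrt{I'}$ or embedded primes via an elimination order) are heuristics, not proofs; the Eisenbud--Sturmfels theory you invoke describes binomial primary decomposition but does not by itself force radicality or exclude embedded components from the hypothesis ``every reduced Gr\"obner basis is quadratic''. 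This is precisely why the authors leave the statement as a conjecture.

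One further wrinkle in your reduction: the conjecture, read literally, is a strict dichotomy---either \emph{all} generators have pairwise disjoint supports, or $I$ equals a single $I_{D_{2\cdot 3^r}}$. Your component decomposition could in principle produce a mix (some isolated binomials together with one nontrivial divisor-lattice component, or even several divisor-lattice components in disjoint variable sets), which fits neither alternative as stated. Either the conjecture is tacitly about indecomposable $I$, or part of what must be shown is that such mixed configurations cannot occur; your proposal does not address this.
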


\end{document}